\newtheorem{thm}{Theorem}
\newtheorem{prop}{Proposition}
\newtheorem{rem}{Remark}
\newtheorem{exam}{Example}
\newtheorem{lem}{Lemma}
\begin{document}

    \title {The matrix equation $XA-AX=f(X)$ \\when $A$ is diagonalizable }
    \author{Gerald BOURGEOIS}
    
    \date{July-12-2014}
    \address{G\'erald Bourgeois, GAATI, Universit\'e de la polyn\'esie fran\c caise, BP 6570, 98702 FAA'A, Tahiti, Polyn\'esie Fran\c caise.}
    \email{bourgeois.gerald@gmail.com}
        
  \subjclass[2010]{Primary 15A15}
    \keywords{}

\begin{abstract} 
\end{abstract}

\maketitle
\section{Introduction} 
Let $K$ be an algebraically closed field with characteristic $0$, $A\in M_n(K)$ and $f\in K[x]$. In \cite{1}, we study the matrix equation in the unknown $X\in M_n(K)$
\begin{equation}   \label{fonc}
	XA-AX=f(X).
	\end{equation}
	 We show that necessarily $A$ and any solution $X$ are simultaneously triangularizable. Yet, we study essentially the solutions that have a sole eigenvalue. 
  Here $A\in M_n(K)$ is a diagonalizable matrix and $f\in K[x]$ is such that its roots in $K$ are known. In the first part, we consider all the solutions of Eq (\ref{fonc}) when $A$ has two distinct eigenvalues.
		Clearly, Eq (\ref{fonc}) admit trival solutions, that is to say, solutions that satisfy $AX=XA$ and $f(X)=0$. The following result gives a condition on $A$ and $f$ so that there exist non-trivial solutions.
	\begin{prop}      \label{triang1}
Let $A=\mathrm{diag}(\mu I_p,\lambda I_q)$ where $\lambda,\mu$ are \emph{distinct} elements of $K$ and $\mathcal{T}=f'(f^{-1}(0))\setminus \{0\}$.\\
$i)$ If $\pm(\lambda-\mu)\notin \mathcal{T}$ then any solution $X$ of Eq (\ref{fonc}) satisfies $XA=AX$.\\
 $ii)$ If $\mu-\lambda\notin \mathcal{T}$, then the solutions of Eq (\ref{fonc}) are in the form 
$$X=\begin{pmatrix}P&Q\\0_{q,p}&S\end{pmatrix}\text{ where }f(P)=0_p\;,\;f(Q)=0_q$$
 and there exist such solutions that do not commute with $A$ if and only if 
$$\lambda-\mu\in \mathcal{T}.$$
\end{prop}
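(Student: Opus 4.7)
The strategy is to reduce to a basis in which $A$ is diagonal and $X$ is upper triangular, then extract scalar constraints entry by entry via the divided-difference formula for $f(X)$. By the simultaneous triangularization result of \cite{1}, one can find a basis $\mathcal{B}$ in which $A = \mathrm{diag}(a_1, \ldots, a_n)$ with each $a_k \in \{\lambda, \mu\}$ in some order, and $X$ is upper triangular with diagonal $(x_1, \ldots, x_n)$. Reading diagonals in $f(X) = XA - AX$ forces $f(x_k) = 0$ for every $k$. For $i < j$, the $(i,j)$-entry of the equation reads
\[
(a_j - a_i) X_{ij} = f(X)_{ij} = \sum_{m \geq 1}\ \sum_{i = l_0 < l_1 < \cdots < l_m = j} X_{l_0 l_1} X_{l_1 l_2} \cdots X_{l_{m-1} l_m}\, f[x_{l_0}, \ldots, x_{l_m}],
\]
the standard expansion of $f(X)_{ij}$ in divided differences. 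Since every $x_k$ is a root of $f$, the first-order divided difference $f[x_i, x_j]$ equals $0$ when $x_i \neq x_j$ and equals $f'(x_i) \in \mathcal{T} \cup \{0\}$ when $x_i = x_j$.

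For part $i)$, assuming $\pm(\lambda - \mu) \notin \mathcal{T}$, I would prove by induction on $d = j - i$ that $X_{ij} = 0$ whenever $i < j$ and $a_i \neq a_j$. In the inductive step one observes that any chain of length $m \geq 2$ from $i$ to $j$ must contain a consecutive pair $(l_{s-1}, l_s)$ with $a_{l_{s-1}} \neq a_{l_s}$ and $l_s - l_{s-1} < d$; the induction hypothesis kills $X_{l_{s-1}, l_s}$, so every non-direct chain contributes $0$. This collapses $f(X)_{ij}$ to $X_{ij} f[x_i, x_j]$, reducing the equation to $X_{ij}\bigl((a_j - a_i) - f[x_i, x_j]\bigr) = 0$. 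Since $a_j - a_i = \pm(\lambda - \mu) \notin \mathcal{T}$ while $f[x_i, x_j] \in \mathcal{T} \cup \{0\}$, the two sides are unequal and $X_{ij} = 0$. Hence in basis $\mathcal{B}$ the matrix $X$ is block diagonal with respect to the eigenspaces of $A$, so $XA = AX$.

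For part $ii)$, the identical induction restricted to pairs with $a_i = \lambda$ and $a_j = \mu$ shows $X_{ij} = 0$ for all such pairs under $\mu - \lambda \notin \mathcal{T}$; the key observation is that every chain from a $\lambda$-position to a $\mu$-position contains at least one $\lambda \to \mu$ transition, which is killed by induction. This forces $X(V_\mu) \subseteq V_\mu$, i.e.\ in the original basis the lower-left block of $X$ vanishes. Since $X^k$ is then block upper triangular with diagonal blocks $P^k$ and $S^k$, so is $f(X)$ with diagonal blocks $f(P)$ and $f(S)$; the diagonal-block equations give $f(P) = 0_p$ and $f(S) = 0_q$. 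For the characterization of non-commuting solutions: if $\lambda - \mu \in \mathcal{T}$, choose $\xi \in f^{-1}(0)$ with $f'(\xi) = \lambda - \mu$ and check that $X = \left(\begin{smallmatrix} \xi I_p & Q \\ 0 & \xi I_q \end{smallmatrix}\right)$ satisfies $f(X) = f'(\xi)(X - \xi I) = XA - AX$ with $XA \neq AX$ for any nonzero $Q$; conversely, if $\lambda - \mu \notin \mathcal{T}$ then part $i)$ applies and every solution commutes with $A$. The main technical step is the inductive argument: one must verify that every chain of length $\geq 2$ contributing to $f(X)_{ij}$ contains an edge already covered by the inductive hypothesis, so that each entry-wise equation collapses to the scalar condition $a_j - a_i = f[x_i, x_j]$ that the hypotheses on $\mathcal{T}$ are precisely tailored to rule out.
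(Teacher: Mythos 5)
Your argument is correct, but it follows a genuinely different route from the paper. The paper never triangularizes: it works directly with the $2\times 2$ block decomposition $X=\left(\begin{smallmatrix}P&Q\\R&S\end{smallmatrix}\right)$, first exploiting the fact that $XA-AX=f(X)$ commutes with $X$ to extract the relations $QR=RQ=0$, $PQ=QS$, $SR=RP$, which yield the closed form $f(X)=\left(\begin{smallmatrix}f(P)&f'(P)Q\\f'(S)R&f(S)\end{smallmatrix}\right)$ and reduce part $i)$ to the spectra of the Kronecker products $f'(P)\otimes I_q$ and $f'(S)\otimes I_p$; part $ii)$ is then handled via the Fr\'echet derivative and the Sylvester-type operator $\phi$ with spectrum $(f[\alpha_i,\beta_j])$. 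You instead pass to a basis in which $A$ is diagonal and $X$ upper triangular and run an entrywise induction on $j-i$ using the divided-difference (Opitz/Descloux) expansion of $f(X)_{ij}$; the scalar obstruction $a_j-a_i\notin\mathcal{T}\cup\{0\}$ then does all the work. Your route is uniform across both parts, avoids the Fr\'echet-derivative machinery, and visibly generalizes to a diagonalizable $A$ with any number of eigenvalues; its costs are that it imports the simultaneous triangularizability of $A$ and $X$ from \cite{1} (the paper's proof of this proposition is self-contained on that point), and that it does not directly exhibit the operator $\phi$ and its spectrum, which the paper reuses in the proof of Theorem \ref{dim}. Two small points to tighten: the reduction from ``simultaneously triangularizable'' to ``$A$ diagonal \emph{and} $X$ triangular in the same basis'' needs the standard refinement that each subspace of an $A$-invariant complete flag splits along the eigenspaces of $A$ (valid since $A$ is diagonalizable), so the flag admits a basis of $A$-eigenvectors; and the condition ``$f(Q)=0_q$'' in the statement is a typo for $f(S)=0_q$, which is what you (correctly) prove.
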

In the second part, we study a model of Eq (\ref{fonc}) that admits non-trivial solutions in the sense of Proposition \ref{triang1} $ii)$. For the sake of simplicity, we choose the equation
\begin{equation}   \label{gener} XA-AX=X^2-X^3\text{ in the unknown }X\in M_n(K).  \end{equation}	
where $A\in M_n(K)$ is a \emph{diagonalizable} matrix over $K$.
Yet, the results obtained below appear to be generalizable. We need the following\\
\textbf{Notation.} We can write the complete spectrum $\sigma(A)$ of $A$ in the form 
\begin{equation*} \sigma(A)=\bigcup_{r=1}^k B_r\text{ where the }(B_r)_r\text{ are lists that satisfy the following:} \end{equation*}
$i)$ For every $r$, there exists $\lambda_r\in K$, $c_r\in\mathbb{N}$ such that $B_r=\{L_{r,c_r},\cdots,L_{r,1},L_{r,0}\}$ where, for every $0\leq i\leq c_r$, the list $L_{r,i}$ is composed of the eigenvalue $\lambda_r+i$ numbered with its multiplicity.		\\
$ii)$ If $r\not=s$ and $u\in B_r,v\in B_s$, then $u-v\not=1$.		\\
We consider the ordering of the eigenvalues of $A$ induced by such a sequence $(B_r)_r$ and the associated diagonal form of $A$: there exists an invertible matrix $P$ such that $P^{-1}AP=\bigoplus_{r=1}^kU_r$ where, for every $r$, $U_r=\mathrm{diag}(B_r)$.\\
We show that the solutions of Eq (\ref{gener}) admit a decomposition in direct sum.
\begin{thm}     \label{sol}
Let $A$ be a diagonalizable matrix and let $X$ be a solution of Eq (\ref{gener}). With the previous notation, 
$$P^{-1}XP=\bigoplus_{r=1}^kX_r\text{ where, for every }r,\;\; X_rU_r-U_rX_r={X_r}^2-{X_r}^3.$$
 Moreover, if one adopts the block structure associated with the decomposition of $B_r$, then $X_r$ is a upper triangular block-matrix, the diagonal of which, being
 $$Y_{r,c_r},\cdots,Y_{r,1},Y_{r,0}\text{ and satisfying: for every }i,\;\;{Y_{r,i}}^2-{Y_{r,i}}^3=0.$$
\end{thm}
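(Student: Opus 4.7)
The plan is to combine the simultaneous triangularizability of $A$ and $X$ from \cite{1} with the Jordan--Chevalley decomposition of $X$ and a cyclic-subspace analysis. Throughout, let $V^{(r)}$ denote the sum of $A$-eigenspaces with eigenvalues in $B_r$, so that $V=\bigoplus_r V^{(r)}$ compatibly with $P^{-1}AP=\bigoplus_r U_r$.

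By \cite{1}, $A$ and $X$ are simultaneously triangularizable, hence $f(X)=XA-AX$ is strictly upper triangular in a common basis and therefore nilpotent; every eigenvalue of $X$ satisfies $t^2(1-t)=0$ and so lies in $\{0,1\}$. Write the Jordan--Chevalley decomposition $X=D+N$, with $D=p(X)$ the spectral projector onto the $1$-eigenspace. An easy induction on $n$, using that $X$ commutes with $f(X)$, gives $[X^n,A]=nX^{n-1}f(X)$, hence by linearity $[q(X),A]=q'(X)f(X)$ for every polynomial $q$. Choose $p$ with $p\equiv 0\pmod{t^a}$ and $p\equiv 1\pmod{(t-1)^b}$, where $a,b$ are the Jordan indices of $X$ at $0$ and $1$: then $p'(t)\,t^2(1-t)$ is divisible both by $t^{a+1}$ and by $(t-1)^b$, hence by the minimal polynomial $t^a(t-1)^b$ of $X$, so $[D,A]=p'(X)f(X)=0$. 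The generalized eigenspaces $V_0,V_1$ of $X$ are thus $A$-stable; set $A_i=A|_{V_i}$ (each diagonalizable), $N_0=X|_{V_0}$ and $N_1=X|_{V_1}-I$ (both nilpotent). The equation restricts to $[N_i,A_i]=g_i(N_i)$ with $g_0(t)=t^2(1-t)$ and $g_1(t)=-t(1+t)^2$.

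The core step is a local analysis on cyclic subspaces. For $\nu\in\sigma(A)$ and $v\in E_\nu\cap V_i$, set $W_v=\mathrm{span}(v,N_iv,N_i^2v,\ldots)$. The identity $[N_i^k,A_i]=kN_i^{k-1}g_i(N_i)$ makes $W_v$ both $N_i$- and $A_i$-stable, and yields the explicit matrix of $A_i|_{W_v}$ in the basis $(N_i^kv)_{k\ge 0}$: lower triangular with diagonal $\nu,\nu,\nu,\ldots$ when $i=0$, and with diagonal $\nu,\nu+1,\nu+2,\ldots$ when $i=1$. Since $A_i|_{W_v}$ is diagonalizable, the case $i=0$ forces $A_0|_{W_v}=\nu I$, and re-substituting in the matrix identity yields $N_0^2v=0$ and $N_0v\in E_\nu$. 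When $i=1$, the distinct consecutive eigenvalues $\nu,\nu+1,\ldots$ on $W_v$ are all eigenvalues of $A$, which by condition $ii)$ on the $B_r$'s forces them all to lie in the chain $B_{r(\nu)}$ containing $\nu$; in particular $W_v\subseteq V^{(r(\nu))}\cap V_1$. Matching the coefficient of $v$ in the eigenvector equation $A_1w=(\nu+k)w$, $k\ge 1$, gives $a_0\nu=a_0(\nu+k)$, so this coefficient vanishes; expanding $N_1v$ in the eigenvector basis of $W_v$ then shows $N_1v\in\bigoplus_{k\ge 1}(E_{\nu+k}\cap V_1)$, i.e.\ $N_1$ strictly raises the $A$-eigenvalue within the chain.

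Assembling: $X|_{V_0}$ preserves every $E_\nu\cap V_0$, and $X|_{V_1}$ sends each $E_\nu\cap V_1$ into $\bigoplus_{k\ge 0}(E_{\nu+k}\cap V_1)$ (summed over $k$ with $\nu+k\in B_{r(\nu)}$); both therefore preserve every $V^{(r)}$, so $X$ does too. This gives $P^{-1}XP=\bigoplus_r X_r$ with $X_rU_r-U_rX_r=X_r^2-X_r^3$. In the block structure on $V^{(r)}$ induced by the decomposition of $B_r$, $X_r$ is upper triangular since $X$ never lowers the $A$-eigenvalue, and each diagonal block $Y_{r,i}$ decomposes as $(N_0|_{E_{\lambda_r+i}\cap V_0})\oplus I|_{E_{\lambda_r+i}\cap V_1}$, which satisfies $Y_{r,i}^2=Y_{r,i}^3$ because $N_0^2=0$. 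The main obstacle is the cyclic-subspace step, where the scalar relation $[N_i,A_i]=g_i(N_i)$ must be transcribed into the lower-triangular matrix on $W_v$, and diagonalizability of $A_i$ invoked twice---once to force the square-zero structure of $N_0$ on each eigenspace, and once to translate condition $ii)$ into the shifting behavior of $N_1$.
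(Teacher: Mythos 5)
Your proof is correct, but it reaches the key structural fact --- that $X$ maps $\ker(A-\nu I)$ into $\bigoplus_{k\ge 0}\ker(A-(\nu+k)I)$ with the shifts confined to the chain $B_{r(\nu)}$ --- by a genuinely different mechanism than the paper. The paper deduces Theorem \ref{sol} from Proposition \ref{stab}, whose engine is the inductive family of identities of Lemma \ref{rec1}, $(A-sI)\cdots(A-I)A\,Xu=\phi_s(X)X^2(I-X)^{s+1}u$ with the non-vanishing $\phi_s(1)\neq 0$ tracked through Lemma \ref{poly}; the annihilation of $Xu$ by the product $\prod_i(A-(\lambda+i)I)$ over the chain is then forced by the minimal polynomial of $u$ relative to $X$. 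You instead split $V$ into the generalized eigenspaces $V_0,V_1$ of $X$ (proved $A$-stable by applying the derivation identity $[q(X),A]=q'(X)f(X)$ --- the paper's Lemma \ref{inva} $i)$ --- to the spectral projector), and then read the eigenvalue shifts off the explicit lower-triangular matrix of $A_i$ on each cyclic subspace $K[N_i]v$, invoking diagonalizability of the restriction. Both arguments rest on the same two pillars (the derivation identity and the diagonalizability of $A$), but yours trades the polynomial bookkeeping of $\phi_s,\psi$ for linear-algebraic bookkeeping on cyclic subspaces, and it yields strictly more information along the way: $N_0^2=0$ eigenspace by eigenspace, so each diagonal block is explicitly $N_0\oplus I$ rather than merely a solution of $Y^2=Y^3$, and the unipotent part strictly raises the $A$-eigenvalue by integer steps. (The paper also obtains $N^2=0$, in Proposition \ref{decomp}, but by citing \cite[Corollary 1]{1}.) The paper's formulation has the advantage of generalizing more directly to equations of the form $XA-AX=X^pg(X)$ and of feeding straight into the dimension count of Theorem \ref{dim}. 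Two small points you should make explicit in a final write-up: the family $(N_i^kv)_k$ must be truncated at the last nonzero iterate to be a basis of $W_v$, and the passage from ``$w_k\in\mathrm{span}(N_1v,\dots,N_1^mv)$ for $k\ge1$'' to ``$N_1v\in\mathrm{span}(w_1,\dots,w_m)$'' needs the one-line dimension count showing these two $m$-dimensional subspaces coincide.
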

Finally, we give the dimension of the algebraic variety of solutions of Eq (\ref{gener}) when $A$ satisfies the condition of Proposition \ref{triang1} $ii)$.
\begin{thm}  \label{dim}
	Let $A=\mathrm{diag}(I_p,0_q)$, where $\dfrac{1}{5}\leq\dfrac{p}{q}\leq 5$ and let 
	$$\rho=\left\lfloor{(11p^2+11q^2+2pq)/16}\right\rfloor.$$
	Then the algebraic variety of solutions of Eq (\ref{gener}) has dimension $\rho$ or $\rho-1$.			
				\end{thm}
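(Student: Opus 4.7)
My plan is to apply Proposition \ref{triang1} $ii)$, solve the resulting linear equation for the off-diagonal block, and reduce the problem to a constrained integer optimization over Jordan-type parameters. For $f(x) = x^2 - x^3$, one has $f'(f^{-1}(0)) = \{0, -1\}$, hence $\mathcal{T} = \{-1\}$; with $\mu = 1$, $\lambda = 0$, the condition $\mu - \lambda = 1 \notin \mathcal{T}$ holds. So by Proposition \ref{triang1} $ii)$, every solution has the form $X = \begin{pmatrix} P & Q \\ 0 & S \end{pmatrix}$ with $P^2 = P^3$ and $S^2 = S^3$. Expanding $XA - AX = X^2 - X^3$ block-wise, the diagonal blocks reproduce these constraints, and the off-diagonal block yields the linear equation
$$(I - P^2)Q + PQ(I - S) + QS(I - S) = 0 \qquad (\star)$$
in the unknown $Q$.

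The next step is to compute the kernel of $(\star)$. Since $P^2 = P^3$, one has $K^p = V_1^P \oplus V_0^P$ with $P$ acting as the identity on $V_1^P$ of dimension $p_1$ and as a nilpotent $P_0$ with $P_0^2 = 0$ on the complement of dimension $p_0 = p - p_1$; decompose $K^q$ analogously with $q_1 = \dim V_1^S$, $q_0 = q - q_1$. Writing $Q$ as a $2 \times 2$ block matrix $(Q^{ij})$ in these decompositions, a direct computation using $P_0^2 = S_0^2 = 0$ shows that the blocks $Q^{10}$, $Q^{01}$, $Q^{00}$ all vanish (for $Q^{00}$, multiplying the equation on the left by $P_0$ yields $P_0 Q^{00}(I + S_0) = 0$, hence $P_0 Q^{00} = 0$, and substituting back gives $Q^{00} = 0$), while $Q^{11}$ is entirely free. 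Thus the kernel of $(\star)$ has dimension $p_1 q_1$, depending only on $p_1$ and $q_1$.

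I then stratify by Jordan types. The variety $\{P \in M_p : P^2 = P^3\}$ decomposes into irreducible components indexed by $p_1 \in \{0, 1, \ldots, p\}$ (since $p_1$ is invariant under conjugacy and under taking closure); each component is the closure of the conjugacy class of maximal dimension, which occurs when the nilpotent part of $P$ has $\lfloor p_0/2 \rfloor$ Jordan blocks of size $2$. The standard centralizer formula for nilpotent Jordan type $(2^a, 1^{p_0 - 2a})$, optimized over $a$, gives a component of dimension $p^2 - p_1^2 - \lceil p_0^2/2 \rceil$, and the analogous statement holds for $S$. Combined with the constant $Q$-fiber of dimension $p_1 q_1$, the solution variety has irreducible components indexed by pairs $(p_1, q_1) \in \{0, \ldots, p\} \times \{0, \ldots, q\}$, of dimension
$$D(p_1, q_1) = p^2 + q^2 + p_1 q_1 - p_1^2 - q_1^2 - \left\lceil \tfrac{(p - p_1)^2}{2} \right\rceil - \left\lceil \tfrac{(q - q_1)^2}{2} \right\rceil.$$

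Finally, I maximize $D$ over integer points. Dropping the ceilings, the continuous optimum sits at $(p_1, q_1) = ((3p+q)/8, (p+3q)/8)$ with value $(11p^2 + 11q^2 + 2pq)/16$; the hypothesis $1/5 \leq p/q \leq 5$ ensures this point lies in $[0,p] \times [0,q]$. The smooth part of $D$ has Hessian with eigenvalues $-2$ and $-4$, so rounding $(p_1, q_1)$ to nearest integers loses at most $1$ in dimension. The delicate point, which I expect to be the main obstacle, is combining this quadratic rounding penalty with the parity corrections from the ceiling terms: a short case analysis on the residues of $(p, q) \bmod 8$ should show that one can always choose integer $(p_1, q_1, a_P, a_S)$ so that the resulting $D$ equals either $\rho$ or $\rho - 1$, and that no larger integer value is attainable.
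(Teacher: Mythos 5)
Your proposal follows essentially the same route as the paper: block-triangularize $X$, observe that for fixed $P,S$ the admissible $Q$ form a linear space of dimension $\dim\ker(P-I)\cdot\dim\ker(S-I)$, stratify by Jordan type, and maximize the resulting quadratic over integer parameters; your $(p_1,q_1,a_P,a_S)$ are exactly the paper's $(\tau_1,\tau_2,k_1,k_2)$, and your component dimension $p^2-p_1^2-\lceil (p-p_1)^2/2\rceil$ coincides with the paper's $r_p(k_1,\tau_1)$ after optimizing in $k_1$, so the objective, the critical point $((3p+q)/8,(p+3q)/8)$ and the value $M_{p,q}$ all agree. The two sub-steps you do differently are both correct and equivalent to the paper's: you compute the $Q$-kernel by a direct $2\times 2$ block calculation (which checks out, including the $Q^{00}$ argument) where the paper reads off the spectrum of the Sylvester-type operator $\phi$ via divided differences, and you get orbit dimensions from the centralizer formula where Lemma \ref{degre3} counts flags. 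The one step you leave open --- that integer rounding plus the ceiling corrections never cost more than $1$ below $M_{p,q}$ --- is precisely the step the paper also does not prove (it simply asserts the mod-$16$ table), so your write-up is no less complete than the original; the solid part in both is the upper bound $\nu\le\lfloor M_{p,q}\rfloor=\rho$, which follows from concavity of the relaxed objective.
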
 
	\section{Non-trivial solutions of the matrix equation $XA-AX=f(X)$ }
\subsection{Proof of Proposition \ref{triang1}}		
\begin{proof}
\textbf{Part 1.} $\bullet$ Put $X=\begin{pmatrix}P&Q\\R&S\end{pmatrix}$. Then Eq (\ref{fonc}) can be written 
$$(\lambda-\mu)\begin{pmatrix}0&Q\\-R&0\end{pmatrix}=f(X).$$
The LHS commute with $X$, that implies: 
\begin{equation}  \label{cond1}    QR=0_p\;,\;RQ=0_q\;,\;QS=PQ\;,\;RP=SR.  \end{equation}
$\bullet$ We show that $f(X)=\begin{pmatrix}f(P)&f'(P)Q\\f'(S)R&f(S)\end{pmatrix}$. Indeed, by linearity, it suffices to prove that, for every $k$, $X^k=\begin{pmatrix}P^k&kP^{k-1}Q\\kS^{k-1}R&S^k\end{pmatrix}$. If the previous formula is true, then 
$$X^{k+1}=\begin{pmatrix}P^{k+1}&P^{k-1}(PQ+kQS)\\S^{k-1}(kRP+SR)&S^{k+1}\end{pmatrix}$$
and we conclude by recurrence.\\
$\bullet$ Finally, there are, in addition to relations Eq (\ref{cond1}), the following ones
\begin{equation}  \label{cond2} f(P)=0_p\;,\;f(S)=0_q\;,\;f'(P)Q=(\lambda-\mu)Q\;,\;f'(S)R=(\mu-\lambda)R.  \end{equation}
 We can calculate the solutions in $P,S$. Note that the \emph{sets} of eigenvalues $\sigma(P)$ and $\sigma(S)$ are included in $f^{-1}(0)$. We consider such a couple solution. It remains to solve the linear system
$$(f'(P)\bigotimes I_q)Q=(\lambda-\mu)Q\;,\;(f'(S)\bigotimes I_p)R=(\mu-\lambda)R.$$
The \emph{sets} $\sigma(f'(P)\bigotimes I_q)$ and $\sigma(f'(S)\bigotimes I_p)$ are included in $f'(f^{-1}(0))$. Thus, if $\pm(\lambda-\mu)\notin \mathcal{T}$, then $Q=0,R=0$ and $AX=XA$. In the sequel, we assume that $\mu-\lambda \notin \mathcal{T}$ ; then $R=0$.\\
\textbf{Part 2.} One has $f(X)=\begin{pmatrix}f(P)&Z\\0&f(S)\end{pmatrix}$ with 
$$f'(\begin{pmatrix}P&0\\0&S\end{pmatrix},\begin{pmatrix}0&Q\\0&0\end{pmatrix})=\begin{pmatrix}0&Z\\0&0\end{pmatrix}$$
where $f'(u,v)$, the Frechet derivative of $f$ in $u$, is linear in $v$ (see \cite[Theorem 4.12]{2}). In particular, $Z$ is a linear function of $Q$.
By identification, we obtain 
$$f(P)=0\;,\;f(S)=0\;,\;(\lambda-\mu)Q=Z.$$
 We can calculate the solutions in $P,S$. Note that $\sigma(P)$ and $\sigma(S)$ are included in $f^{-1}(0)$. We consider such a couple solution. It remains to solve a linear equation in the form $\phi(Q)=(\lambda-\mu)Q$ where 
$$\sigma(\phi)=(f[\alpha_i,\beta_j])_{i\leq p;j\leq q},\text{ with  }\sigma(P)=(\alpha_i)_{i\leq p}\text{ and }\sigma(S)=(\beta_j)_{j\leq q}$$
 (see \cite[Theorem 3.9 and the proof of Theorem 3.11]{2}). Here $f[\alpha_i,\beta_j]=0$ if $\alpha_i\not=\beta_j$ and $f'(\alpha_i)$ else. Since, for every $i\leq p,j\leq q$, $f(\alpha_i)=f(\beta_j)=0$, we conclude that 
$$\sigma(\phi)\subset \mathcal{T}\cup \{0\}.$$
Note that, if $\sigma(P)\cap\sigma(S)=\emptyset$, then $Q=0$.
Finally, there is a non-zero solution in $Q$ if and only if $\lambda-\mu\in \mathcal{T}$ ; indeed, if there is $\alpha\in K$ such that $\lambda-\mu=f'(\alpha)\not=0$ and $f(\alpha)=0$, then we choose $P=\alpha I_p\;,\;S=\alpha I_q$ and $Q\in\ker(\phi-(\lambda-\mu)I_{pq})\setminus \{0\}$.
\end{proof}
Note that the previous results remain valid if we change the polynomial $f$ with a holomorphic function.
\begin{exam}
Let $K=\mathbb{C}$, $A=\mathrm{diag}(0_p,I_q)$, $f(x)=\log(x)$, where $\log$ is the principal logarithm ; we seek matrices $X$ that have no eigenvalues on $\mathbb{R}^-$ (the non positive real numbers) such that $XA-AX=\log(X)$. Here $\mathcal{T}=\{1\}$ is equal to $\lambda-\mu$. Thus a solution is in the form $X=\begin{pmatrix}P&Q\\0_{q,p}&S\end{pmatrix}$ where $\log(P)=0_p,\log(S)=0_q$. Finally , the solutions are in the form 
$$X=\begin{pmatrix}I_p&Q\\0&I_q\end{pmatrix}\text{ where }Q\text{ is an arbitrary }p\times q\text{ matrix.}$$
\end{exam}
\begin{exam}
Let $K=\mathbb{C}$, $n=4$, $A=\mathrm{diag}(0_2,I_2)$ ; we consider the equation $XA-AX=\exp(X)-I_4$. Here $f^{-1}(0)=2i\pi\mathbb{Z}$ and $\mathcal{T}=\{1\}$ is $\lambda-\mu$ again. Thus a solution is in the form  
$$X=\begin{pmatrix}P&Q\\0_2&S\end{pmatrix}\text{ where }\exp(P)=I_2\;,\;\exp(S)=I_2\;,\;\phi(Q)=Q.$$
Since $P,S$ are diagonalizable, $\phi$ is diagonalizable too.
Up to a change of basis leaving invariant $A$, we may assume that 
$$P=\mathrm{diag}(2i\pi p_1,2i\pi p_2)\;,\;S=\mathrm{diag}(2i\pi s_1,2i\pi s_2).$$
Note that $Q$ depends on $\mathrm{dim}(\ker(\phi)-I)$ free parameters. We consider the following cases:\\
$\bullet$ If $p_1=p_2=s_1=s_2$, then $\mathrm{dim}(\ker(\phi)-I)=4$ and $Q$ is an arbitrary matrix.\\
$\bullet$ If $p_1=s_1\not=p_2=s_2$, then $\mathrm{dim}(\ker(\phi)-I)=2$ and $Q=\mathrm{diag}(u,v)$ where $u,v$ are arbitrary elements of $K$.\\
$\bullet$ If $p_1=s_1$ is the sole equality, then $\mathrm{dim}(\ker(\phi)-I)=1$ and $Q=\mathrm{diag}(u,0)$ where $u$ is arbitrary.
\end{exam}
\subsection{When $f$ is degenerated function}
For special functions $f$, it can happens that $\{\pm (\lambda-\mu)\}\subset \mathcal{T}$ ; we shall see that there exist solutions that are not in block-triangular form. We fix $P,S$ such that $f(P)=0_p,f(S)=0_q$. Now $Q,R$ satisfy
$$QR=RQ=0\;,\;Q\in\ker(P\bigotimes I-I\bigotimes S^T)\cap\ker(f'(P)\bigotimes I_q-(\lambda-\mu)I_{pq}),$$
$$R\in\ker(S\bigotimes I-I\bigotimes P^T)\cap\ker(f'(S)\bigotimes I_p-(\mu-\lambda)I_{pq}).$$
We are just going to study this typical instance:\\
let $n=4$, $A=\mathrm{diag}(2,2,0,0)$ and $f(x)=x^2-1$ ; here $\mathcal{T}=\{2,-2\}$. We choose $P=S=\mathrm{diag}(1,-1)$. Then the associated solutions are the following ones
$$X=\begin{pmatrix}1&0&0&0\\0&-1&0&u\\v&0&1&0\\0&0&0&-1\end{pmatrix}\text{ where }u\;,\;v\text{ are arbitrary elements of }K.$$ 
In the sequel, we need the following
				\begin{lem}   \label{inva}
				Let $X$ be a solution of the equation 
				\begin{equation}  \label{equaring} XA-AX=X^pg(X)\text{ where }p\geq 2\text{ and }g\text{ is a polynomial}. \end{equation}
$i)$ For every polynomial $P$, $P(X)A-AP(X)=P'(X)X^pg(X)$.\\
				$ii)$ For every integer $k$, $\ker(X^k)$ is $A$-invariant.						
				\end{lem}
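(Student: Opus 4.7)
The plan is to handle $(i)$ first by reducing to monomials and using induction on the degree, and then to deduce $(ii)$ immediately by applying $(i)$ with $P(X)=X^k$ and exploiting the hypothesis $p\geq 2$.

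For part $(i)$, by linearity in $P$ it suffices to prove the identity when $P(X)=X^m$, i.e.\ to check that
$$X^m A - A X^m = m\, X^{m-1}\cdot X^p g(X) = m\, X^{m+p-1}g(X)$$
for every $m\geq 0$. The case $m=0$ is trivial and the case $m=1$ is exactly the hypothesis \eqref{equaring}. Assuming the formula holds for $m$, I would compute
$$X^{m+1}A = X\cdot(X^m A) = X\bigl(AX^m + m X^{m+p-1}g(X)\bigr) = (XA)X^m + m X^{m+p}g(X),$$
then substitute $XA = AX + X^p g(X)$ to obtain $X^{m+1}A - AX^{m+1} = (m+1) X^{m+p}g(X)$, which is the desired identity for $m+1$. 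The induction uses only that $X$ commutes with $X^p g(X)$, so no obstacle arises.

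For part $(ii)$, I would apply $(i)$ to the polynomial $P(x)=x^k$, giving
$$X^k A - A X^k = k\, X^{k+p-1}g(X).$$
Since $p\geq 2$, we have $k+p-1\geq k$, so $X^{k+p-1}g(X) = X^{p-1}g(X)\cdot X^k$ (using that $X$ commutes with every polynomial in $X$). For any $v\in\ker(X^k)$, applying both sides to $v$ yields
$$X^k(Av) = A(X^k v) + k\, X^{p-1}g(X)(X^k v) = 0,$$
so $Av\in\ker(X^k)$, proving $A$-invariance.

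There is no real obstacle here; the only point to watch is the use of $p\geq 2$ in part $(ii)$, which is what guarantees that one copy of $X^k$ can be pulled to the right of $X^{k+p-1}g(X)$ so that it annihilates $v$. Part $(i)$ itself is the formal analogue of the Leibniz-type rule $[P(X),A] = P'(X)[X,A]$ that one would expect from the fact that $[X,A]=-X^p g(X)$ commutes with $X$, and the induction makes this precise.
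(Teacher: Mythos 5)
Your proof is correct and takes essentially the same route as the paper: part $(i)$ by induction on the monomials $X^m$ followed by linearity, and part $(ii)$ by applying $(i)$ and using that the commutator term carries a right factor of $X^k$ which annihilates $v$. Your part $(ii)$ is in fact a slightly more direct application of $(i)$ than the paper's two-step version, but the underlying idea is identical.
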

				\begin{proof}
				$i)$ We check by induction that 
				$$\text{ for every }i\geq 1\;,\; X^iA-AX^i=iX^{i-1}X^pg(X).$$
				Reasoning by linearity, we deduce the required result.\\
				$ii)$ Let $u$ be such that $X^ku=0$. Then $X^kA-X^{k-1}AX=X^{p+k-1}g(X)$ and $X^kAu=X^{k-1}AXu$. According to $i)$, 
				$$X^{k-1}AXu=(AX^{k-1}+(k-1)X^{p+k-2}g(X))Xu=0.$$				
				\end{proof}
				\section{The matrix equation $XA-AX=X^2-X^3$}
				Now on, we study Eq (\ref{gener}) when $f(x)=x^2-x^3$ and $A$ is diagonalizable ; note that $0$ is a double root of $f$ and the condition of degeneration obtained in Proposition \ref{triang1} $ii)$ is here $\lambda-\mu=-1$. Let $u\in \ker(A-\lambda I)$ and	$X$ be a solution of Eq (\ref{gener}). 
\begin{lem}    \label{rec1}
Let $\lambda=0$. Then, for every integer $s\geq 0$, there is a polynomial $\phi_s$ such that 
$$(A-sI)\cdots(A-I)AXu=\phi_s(X)X^2(I-X)^{s+1}u\text{ and }\phi_s(1)\not=0.$$
\end{lem}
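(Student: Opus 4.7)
The plan is to argue by induction on $s$. The base case $s=0$ follows by applying the matrix equation $XA-AX=X^2(I-X)$ to $u$ and invoking $Au=0$ (since $\lambda=0$ and $u\in\ker A$): this yields $AXu=-X^2(I-X)u$, so $\phi_0\equiv -I$ works with $\phi_0(1)=-1\neq 0$.

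For the inductive step, set $\psi_s(X)=\phi_s(X)X^2(I-X)^{s+1}$, apply $A-(s+1)I$ to the inductive identity, and use Lemma \ref{inva} $i)$ applied to the polynomial $\psi_s$ (valid since here $p=2$ and $g(X)=I-X$), which gives $A\psi_s(X)=\psi_s(X)A-\psi_s'(X)X^2(I-X)$. Combined with $Au=0$ this yields
$$(A-(s+1)I)\psi_s(X)u = -\psi_s'(X)X^2(I-X)u - (s+1)\psi_s(X)u.$$
Expanding $\psi_s'$ by the Leibniz rule and using the factorization
$$(s+1)+2X-(s+3)X^2 \;=\; (I-X)\bigl[(s+1)+(s+3)X\bigr]$$
to pull out the extra factor $(I-X)$, the right-hand side rewrites as $\phi_{s+1}(X)\,X^2(I-X)^{s+2}u$ with the recursion
$$\phi_{s+1}(X) = -X^2\phi_s'(X)-\bigl[(s+1)+(s+3)X\bigr]\phi_s(X).$$

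The non-vanishing $\phi_s(1)\neq 0$ is then handled by a sign argument. Set $g_s(X):=(-1)^{s+1}\phi_s(X)$; the recursion becomes
$$g_{s+1}(X) = X^2 g_s'(X)+\bigl[(s+1)+(s+3)X\bigr]g_s(X),$$
which manifestly preserves the property ``all coefficients nonnegative, constant term positive.'' Since $g_0=1$ satisfies this and the constant-term recursion reads $g_{s+1}(0)=(s+1)g_s(0)$, one obtains $g_s(0)=s!$ and hence $g_s(1)\geq g_s(0)=s!>0$, so $\phi_s(1)\neq 0$. The only mildly delicate step is the algebraic factorization producing the extra factor $(I-X)$ in the inductive step; once this is in hand, the non-vanishing comes almost for free from the sign-positivity of the rescaled polynomials $g_s$, so no serious obstacle is expected.
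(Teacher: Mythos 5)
Your proof is correct and follows essentially the same route as the paper: the same induction via Lemma \ref{inva} $i)$ applied to $\psi_s$, yielding the identical recursion $\phi_{s+1}(X)=-X^2\phi_s'(X)-[(s+1)+(s+3)X]\phi_s(X)$ (the paper's version, indexed from $s-1$ to $s$, is $\phi_s=-X^2\phi_{s-1}'-2X\phi_{s-1}-s(X+1)\phi_{s-1}$, which is the same polynomial identity), and the same sign-positivity argument for $\phi_s(1)\neq 0$, your $g_s=(-1)^{s+1}\phi_s$ being exactly the paper's $P_s$. The only cosmetic differences are that you conclude via $g_s(1)\geq g_s(0)=s!>0$ where the paper notes that $(P_s(1))_s$ is increasing, and that you correctly invoke Lemma \ref{inva} $i)$ where the paper's text cites part $ii)$ (an apparent typo).
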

\begin{proof}
We show the first condition by recurrence on $s$. If $s=0$, then $AXu=X^2(X-I)u$ and $\phi_0(X)=-1$. Assume that the result is true for $s-1$. Then, according to Lemma \ref{inva} $ii)$, 
$$(A-sI)\phi_{s-1}(X)X^2(I-X)^su=\phi_{s-1}(X)X^2(I-X)^sAu+\phi_s(X)X^2(I-X)^{s+1}u$$
$$\text{ where }\phi_s(X)=-X^2{\phi_{s-1}}'(X)-2X\phi_{s-1}(X)-s(X+1)\phi_{s-1}(X).$$
It remains to show the second condition. That is equivalent to consider the sequence of polynomials $P_0(x)=1,P_s(x)=x^2{P_{s-1}}'(x)+(sx+2x+s)P_{s-1}(x)$ and to show that, for every $s$, $P_s(1)\not=0$. Clearly, for every $s$, $P_s$ is a polynomial of degree $s$ such that each of its coefficients is positive. Therefore, by an easy recurrence, we obtain that the sequence $(P_s(1))_s$ is increasing.
\end{proof}
	\begin{prop}   \label{decomp}
One has 
$$Xu\in \bigoplus_{0\leq i\leq n-1} \ker(A-(i+\lambda)I).$$								
	\end{prop}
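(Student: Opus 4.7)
The plan is to first reduce to the case $\lambda = 0$: replacing $A$ with $A' = A - \lambda I$ preserves the equation ($XA' - A'X = XA - AX$) and shifts the spectrum of $A$ by $-\lambda$, so I may assume $u \in \ker A$. The structural input is the generalized eigenspace decomposition of $X$: since $A$ and $X$ are simultaneously triangularizable by \cite{1} and each diagonal entry of $X$ satisfies $x^2 - x^3 = 0$, the spectrum $\sigma(X) \subset \{0, 1\}$, so $K^n = E_0 \oplus E_1$ with $E_0 = \ker X^n$ and $E_1 = \ker(I - X)^n$. Lemma \ref{inva} $ii)$ gives that $E_0$ is $A$-invariant; applying Lemma \ref{inva} $i)$ with $P(x) = (1 - x)^k$ yields $A(I - X)^k - (I - X)^k A = k X^2 (I - X)^k$, which implies that $\ker(I - X)^k$ (and in particular $E_1$) is also $A$-invariant. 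Writing $u = u_0 + u_1$ with $u_i \in E_i$, the $A$-invariance of each $E_i$ together with $Au = 0$ forces $Au_0 = Au_1 = 0$ separately.

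For $u_1 \in E_1 \cap \ker A$, I apply Lemma \ref{rec1} with $s = n - 1$: the right-hand side $\phi_{n-1}(X) X^2 (I - X)^n u_1$ vanishes since $(I - X)^n u_1 = 0$, hence $\prod_{i=0}^{n-1}(A - iI) \cdot X u_1 = 0$, and diagonalizability of $A$ gives $X u_1 \in \bigoplus_{i=0}^{n-1} \ker(A - iI)$.

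For $u_0 \in E_0 \cap \ker A$, I consider the $X$-cyclic span $V_0 = \mathrm{span}\{X^k u_0 : k \geq 0\}$. Lemma \ref{inva} $i)$ combined with $Au_0 = 0$ gives $A(X^k u_0) = k(X^{k+2} - X^{k+1}) u_0$ for every $k \geq 1$, so $V_0$ is $A$-invariant, and in the basis $u_0, Xu_0, \ldots, X^{N-1} u_0$ (with $N$ the smallest integer satisfying $X^N u_0 = 0$) the matrix of $A|_{V_0}$ is strictly lower triangular, hence nilpotent. Since the restriction of a diagonalizable operator to an invariant subspace is itself diagonalizable, $A|_{V_0}$ is simultaneously nilpotent and diagonalizable, therefore zero; in particular $A(X u_0) = 0$, giving $X u_0 \in \ker A \subset \bigoplus_{i=0}^{n-1} \ker(A - iI)$. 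Adding the two contributions finishes the proof.

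The main difficulty lies in the $E_0$-case: Lemma \ref{rec1}'s right-hand side does not obviously vanish there, because $(I - X)$ is invertible on $E_0$, so one cannot just push $s$ large. The resolution is structural rather than computational --- a nilpotent operator on an invariant subspace of a diagonalizable operator must be zero --- and this is what collapses the apparently non-trivial matrix of $A|_{V_0}$ and replaces the missing vanishing of the Frechet-derivative side of Lemma \ref{rec1}.
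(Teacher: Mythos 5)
Your proof is correct, and while it shares the paper's skeleton (reduce to $\lambda=0$, note $\sigma(X)\subset\{0,1\}$, and finish with Lemma \ref{rec1} at $s=n-1$ plus the coprimality of the factors of $\prod_{i=0}^{n-1}(A-iI)$), it handles the crucial point --- why $X^2(I-X)^n$ annihilates the relevant vector --- in a genuinely different way. The paper proves the \emph{global} identity $X^2(I-X)^n=0$: it passes to the generalized eigenspace decomposition of $X$, restricts the equation to the nilpotent block $N$ of $X$, and invokes \cite[Corollary 1]{1} to get $ND=DN$ and $N^2(I-N)=0$, hence $N^2=0$. You instead argue \emph{locally} on the given vector: you split $u=u_0+u_1$ along $\ker X^n\oplus\ker(I-X)^n$, derive the $A$-invariance of both summands from Lemma \ref{inva} so that each $u_i\in\ker A$, dispose of $u_1$ by Lemma \ref{rec1} directly since $(I-X)^nu_1=0$, and for $u_0$ show that $A$ acts nilpotently on the $X$-cyclic subspace generated by $u_0$, hence acts as zero there because the restriction of a diagonalizable operator to an invariant subspace is diagonalizable. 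Your route is self-contained and avoids the external input \cite[Corollary 1]{1}; the paper's route yields the stronger matrix identity $X^2(I-X)^n=0$ (equivalently $N^2=0$), which resurfaces later (e.g.\ the bound $r\leq 2$ on the minimal polynomial of $u$ in the proof of Proposition \ref{stab}). The two are consistent: your computation $A|_{V_0}=0$ forces $X^{k+2}u_0=X^{k+1}u_0$ for all $k\geq 1$ and hence $X^2u_0=0$, a pointwise shadow of the paper's $N^2=0$.
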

\begin{proof}
We may assume that $\lambda=0$.\\
$\bullet$   Since $X^2-X^3$ is nilpotent, the eigenvalues of $X$ are $0$ or $1$. We show that $X^2(I-X)^n=0$. According to Lemma \ref{inva} $i)$, we may assume that $X=\begin{pmatrix}N&0\\0&L\end{pmatrix}$ where $N$ and $L-I$ are nilpotent and $A=\begin{pmatrix}D&E\\0&F\end{pmatrix}$ where $D$ is diagonalizable over $K$. One has $ND-DN=N^2-N^3$.
According to \cite[Corollary 1]{1}, $ND=DN$ and $N^2(I-N)=0$, that implies $N^2=0$ and we are done.\\
$\bullet$ According to Lemma \ref{rec1}, $(A-(n-1)I)\cdots(A-I)AXu=\phi_{n-1}(X)X^2(I-X)^nu=0$, that is equivalent to the required result.
\end{proof}	
\begin{lem}     \label{poly}
Let $\lambda=0$, $\phi$ be a polynomial such that $\phi(1)\not=0$ and $s,t\geq 0$ be distinct integers. Then there is a polynomial $\psi$ such that $\psi(1)\not=0$ and 
$$(A-sI)\phi(X)X^2(I-X)^tu=\psi(X)X^2(I-X)^tu.$$
\end{lem}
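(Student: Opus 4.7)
The plan is to apply Lemma \ref{inva}(i) to the polynomial $\Phi(x) = \phi(x)\,x^2(1-x)^t$. This yields $A\,\Phi(X) = \Phi(X)\,A - \Phi'(X) X^2(I-X)$. Multiplying on the right by $u$ and using $Au=0$, the first term vanishes, so
$$A\,\Phi(X) u = -\Phi'(X) X^2 (I-X)\,u,$$
and consequently
$$(A-sI)\,\phi(X) X^2 (I-X)^t u = \bigl[-\Phi'(X) X^2 (I-X) - s\,\Phi(X)\bigr] u.$$

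Next I would expand $\Phi'(x) = \phi'(x)\,x^2(1-x)^t + 2x\,\phi(x)(1-x)^t - t\,x^2\phi(x)(1-x)^{t-1}$, the last summand being understood as zero when $t=0$, and then factor $X^2(I-X)^t$ out of the bracket. This is legitimate since every factor inside is a polynomial in $X$, which commutes with $X^2(I-X)^t$. The resulting polynomial is
$$\psi(x) = -\phi'(x)\,x^2(1-x) - 2x\,\phi(x)(1-x) + t\,x^2\phi(x) - s\,\phi(x).$$

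Finally I would evaluate at $x=1$: the first two terms vanish because of the factor $(1-x)$, leaving $\psi(1) = t\,\phi(1) - s\,\phi(1) = (t-s)\,\phi(1)$, which is nonzero since $s\neq t$ and $\phi(1)\neq 0$. No step presents a real obstacle; the main insight is simply to absorb the three factors $\phi$, $X^2$ and $(I-X)^t$ into a single polynomial $\Phi$ before invoking Lemma \ref{inva}(i), so that the Leibniz rule for $\Phi'$ automatically supplies the term $t\,x^2\phi(x)$ whose value at $x=1$ distinguishes $\psi(1)$ from $-s\,\phi(1)$ and keeps it away from zero precisely when $s\neq t$.
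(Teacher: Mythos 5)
Your proof is correct and takes essentially the same route as the paper: the paper's own proof just says ``as in the proof of Lemma \ref{rec1}'' and writes down exactly the polynomial $\psi(x)=-\phi'(x)x^2(1-x)-2\phi(x)x(1-x)+t\phi(x)x^2-s\phi(x)$ that you derive, concluding $\psi(1)=(t-s)\phi(1)\not=0$. You have merely made explicit the underlying mechanism, namely applying Lemma \ref{inva} $i)$ to the product polynomial $\Phi(x)=\phi(x)x^2(1-x)^t$ and using $Au=0$.
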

\begin{proof}
As in the proof of Lemma \ref{rec1}, we obtain  
$$\psi(x)=-\phi'(X)X^2(I-X)-2\phi(X)X(I-X)+t\phi(X)X^2-s\phi(X).$$
Thus $\psi(1)=(t-s)\phi(1)\not=0$.
\end{proof}
\begin{prop}    \label{stab}
If $\lambda+s$ is not an eigenvalue of $A$, then
 $$Xu\in  \bigoplus_{0\leq i\leq s-1} \ker(A-(i+\lambda)I).$$
\end{prop}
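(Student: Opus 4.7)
The plan is to reduce to $\lambda=0$ by replacing $A$ with $A-\lambda I$, which leaves Eq.~(\ref{gener}) and the hypothesis intact. From Proposition~\ref{decomp} I can write $Xu=\sum_{i=0}^{n-1}v_i$ with $v_i\in\ker(A-iI)$; the goal is to show $v_i=0$ for every $i\geq s$. Since the operator $\Pi_{s-1}:=(A-(s-1)I)\cdots(A-I)A$ acts on $\ker(A-iI)$ as the scalar $\prod_{j=0}^{s-1}(i-j)$, which vanishes exactly for $0\leq i\leq s-1$ and is nonzero for every integer $i\geq s$, it suffices to establish the single identity $\Pi_{s-1}Xu=0$ (the case $s=0$ being vacuous).

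To prove this, Lemma~\ref{rec1} supplies the expression
$$\Pi_{s-1}Xu=\phi_{s-1}(X)\,X^2(I-X)^s u,\qquad \phi_{s-1}(1)\neq 0.$$
Consider the $A$-invariant subspace $W:=\bigoplus_{i=0}^{n-1}\ker(A-iI)$, which contains $Xu$ and hence $\Pi_{s-1}Xu$. The minimal polynomial of $A|_W$ has the form $m_W(x)=\prod_{i\in I}(x-i)$ for some set $I$ of non-negative integers, and the hypothesis that $s$ is not an eigenvalue of $A$ forces $s\notin I$. For each $i\in I$, Lemma~\ref{poly} applies with $t=s\neq i$ and converts $(A-iI)\,\psi(X)X^2(I-X)^s u$ into a polynomial-in-$X$ multiple of $X^2(I-X)^s u$ whose coefficient remains nonzero at $1$. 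Iterating over the factors of $m_W(A)$ yields
$$0=m_W(A)\,\Pi_{s-1}Xu=\Phi(X)X^2(I-X)^s u,\qquad \Phi(1)=m_W(s)\,\phi_{s-1}(1)\neq 0.$$

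To finish, I would split $K^n=E_0\oplus E_1$ into the generalized eigenspaces of $X$. As noted in the proof of Proposition~\ref{decomp}, $X^2$ vanishes on $E_0$, so the $E_0$-component of $X^2(I-X)^s u$ is zero and the vector lies entirely in $E_1$. But $X$ has only the eigenvalue $1$ on $E_1$, so $\Phi(X)|_{E_1}$ has only the eigenvalue $\Phi(1)\neq 0$ and is invertible; this forces $X^2(I-X)^s u=0$, whence $\Pi_{s-1}Xu=\phi_{s-1}(X)\cdot 0=0$. The main obstacle I anticipate is precisely this middle step: choosing an operator to apply to the Lemma~\ref{rec1} identity that both annihilates the left-hand side and preserves the nonvanishing at $1$ on the right. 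Passing to $W$ is what guarantees the roots of the annihilating polynomial are integers different from $s$, which is exactly what Lemma~\ref{poly} demands, and the structural fact $X^2|_{E_0}=0$ is then what transfers invertibility on $E_1$ into overall vanishing.
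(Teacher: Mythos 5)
Your argument is correct, and it leans on the same machinery as the paper (reduction to $\lambda=0$, Proposition~\ref{decomp}, and Lemmas~\ref{rec1} and~\ref{poly}), but the way you close the argument is genuinely different. The paper argues by contradiction: assuming $(A-(s-1)I)\cdots A\,Xu\neq 0$, it reads off the minimal polynomial of $u$ relative to $X$, namely $x^r(1-x)^{k+1}$ with $k\geq s$ and $r\leq 2$, invokes Lemma~\ref{rec1} a second time at level $k$ to get $(A-kI)\cdots A\,Xu=0$, cancels the invertible factor $A-sI$, pushes the remaining factors $(A-kI)\cdots(A-(s+1)I)$ through with Lemma~\ref{poly}, and contradicts the choice of $k$. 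You instead prove the identity $(A-(s-1)I)\cdots A\,Xu=0$ directly: you hit the Lemma~\ref{rec1} expression with $m_W(A)$, the minimal polynomial of $A$ restricted to $W=\bigoplus_i\ker(A-iI)$, whose roots are integers distinct from $s$ precisely because of the hypothesis, and you conclude via the splitting $K^n=E_0\oplus E_1$ into generalized eigenspaces of $X$ together with the fact $X^2|_{E_0}=0$ established in Proposition~\ref{decomp}. What your route buys is a direct proof that only needs Lemma~\ref{rec1} at the single level $s-1$, and it localizes the hypothesis cleanly in the nonvanishing $\Phi(1)=m_W(s)\phi_{s-1}(1)\neq 0$; what it costs is the explicit introduction of $W$ and of the generalized eigenspace decomposition of $X$, where the paper instead works with the $X$-minimal polynomial of the single vector $u$. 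Both are complete; your preliminary reduction (that $\Pi_{s-1}Xu=0$ suffices, since $\Pi_{s-1}$ acts on $\ker(A-iI)$ by the nonzero scalar $i(i-1)\cdots(i-s+1)$ for $i\geq s$) is exactly the equivalence the paper uses implicitly in Propositions~\ref{decomp} and~\ref{stab}.
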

\begin{proof}
We may assume that $\lambda=0$. Suppose that $(A-(s-1)I)\cdots(A-I)AXu\not=0$ ; according to Lemma \ref{rec1}, this is equivalent to 
$$\phi_{s-1}(X)X^2(I-X)^su\not=0\text{ with }\phi_{s-1}(1)\not=0.$$
 With respect to the matrix $X$, the minimal polynomial of $u$ has the form $X^r(I-X)^{k+1}$ where $k\geq s$ and $r\leq 2$. Then $(A-kI)\cdots(A-I)AXu=\phi_k(X)X^2(I-X)^{k+1}u=0$. Since $A-sI$ is invertible, one has
$$(A-kI)\cdots(A-(s+1)I)\phi_{s-1}(X)X^2(I-X)^su=0.$$
Since $\phi_{s-1}(1)\not=0$, using repeatedly Lemma \ref{poly}, the previous equality can be written 
$$\psi(X)X^2(I-X)^su=0\text{ where }\psi \text{ is a polynomial such that }\psi(1)\not=0.$$
Therefore, the minimal polynomial of $u$ divides $X^2(I-X)^s$, that is contradictory.
\end{proof}		
Now, we can deduce \textbf{Theorem \ref{sol}}.
\begin{proof}
This follows from Proposition \ref{stab} and the construction of the $(B_r)_r$.
\end{proof}
\begin{rem}
To solve Eq (\ref{gener}), it suffices to solve it 
 when $A$ has the form of a matrix $U_r$ with $\lambda_r=0$. 
\end{rem}
 Assume that $A\in M_n(K)$ is non-derogatory and has $k$ distinct eigenvalues ; according to \cite[Theorem 5]{1}, the algebraic variety of the \emph{nilpotent} solutions $X\in M_n(K)$  of Eq (\ref{equaring}) has dimension $n-k$. We look at the dimension of the set of all solutions of Eq (\ref{gener}) in a particular case.
\begin{lem}    \label{nilpo}
The algebraic variety $\{X\in M_n(K)\;|\; X^2=0\}$ has dimension $\left\lfloor{n^2/2}\right\rfloor$.
\end{lem}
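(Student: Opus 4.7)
The plan is to stratify $V=\{X\in M_n(K):X^2=0\}$ by rank. Any such $X$ satisfies $\operatorname{Im}(X)\subseteq\ker(X)$, so $\operatorname{rank}(X)\leq n-\operatorname{rank}(X)$, i.e.\ $\operatorname{rank}(X)\leq\lfloor n/2\rfloor$. Set
\[
V_r=\{X\in V:\operatorname{rank}(X)=r\},\qquad 0\leq r\leq\lfloor n/2\rfloor,
\]
so that $V=\bigsqcup_r V_r$ is a finite disjoint union of locally closed subsets.

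For each fixed $r$, every element of $V_r$ is a nilpotent matrix whose Jordan blocks all have size $\leq 2$ (since $X^2=0$) and whose rank $r$ forces the number of blocks of size $2$ to be exactly $r$; so the Jordan type is $(2^r,1^{n-2r})$ and $V_r$ is exactly the $\operatorname{GL}_n(K)$-orbit under conjugation of the representative
\[
J_r=\begin{pmatrix}0_r&I_r&0\\ 0&0_r&0\\ 0&0&0_{n-2r}\end{pmatrix}.
\]
Consequently $\dim V_r=n^2-\dim C(J_r)$, where $C(J_r)\subseteq M_n(K)$ denotes the centralizer. I would compute $\dim C(J_r)$ directly: writing a generic $M$ in $3\times 3$ blocks conforming to $J_r$ and solving $MJ_r=J_rM$ forces the blocks in positions $(2,1),(2,3),(3,1)$ to vanish and the two leading $r\times r$ diagonal blocks to coincide. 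Counting the remaining free parameters gives
\[
\dim C(J_r)=2r^2+2r(n-2r)+(n-2r)^2=n^2-2r(n-r),
\]
so $\dim V_r=2r(n-r)$.

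The function $r\mapsto 2r(n-r)$ is strictly increasing on $\{0,1,\dots,\lfloor n/2\rfloor\}$, so its maximum is attained at $r=\lfloor n/2\rfloor$; a two-case check ($n=2m$ gives $2m^2$; $n=2m+1$ gives $2m(m+1)$) shows that this maximum equals $\lfloor n^2/2\rfloor$ in both parities. Since the dimension of a variety equals the maximum of the dimensions of the pieces in any finite stratification into locally closed subsets, we conclude $\dim V=\lfloor n^2/2\rfloor$.

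The only genuine calculation is the centralizer dimension; the conceptual step to isolate is the observation that $X^2=0$ together with a prescribed rank already determines the Jordan type and hence collapses each rank stratum to a single conjugacy class. If a proof avoiding the orbit language is preferred, the same count can be obtained by parametrizing $V_r$ as a bundle over the partial flag variety $\{(L,W):L\subseteq W\subseteq K^n,\,\dim L=r,\,\dim W=n-r\}$ with fiber $\operatorname{Iso}(K^n/W,L)\cong\operatorname{GL}_r(K)$, giving dimension $r(2n-3r)+r^2=2r(n-r)$ again.
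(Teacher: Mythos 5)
Your proof is correct and follows essentially the same route as the paper: stratify $\{X:X^2=0\}$ by the number of $2\times 2$ Jordan blocks (equivalently the rank), show each stratum is a single conjugacy class of dimension $2r(n-r)$, and maximize over $r\leq\lfloor n/2\rfloor$. The only difference is that you obtain the orbit dimension via $n^2-\dim C(J_r)$ with an explicit centralizer computation, whereas the paper parametrizes the orbit directly by the choices of $\ker(X)$, of $\operatorname{im}(X)\subseteq\ker(X)$, and of the induced isomorphism — the fibration you sketch in your last paragraph is exactly the paper's count.
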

\begin{proof}
Any solution $X$ is similar to a matrix in the form $\mathrm{diag}(U_1,\cdots,U_k,0_{n-2k})$, where $U_i=J_2$. We seek the dimension of the similarity class of $X$ as a fuction of $k$. One has $\dim(\mathrm{im}(X))=k$ and $\mathrm{im}(X)\subset\ker(X)$ ; firstly, the choice of $\ker(X)$ depends on $k(n-k)$ parameters ; secondly, the choice of a subspace of dimension $k$ of $\ker(X)$ depends on $k(n-2k)$ parameters ; let $F$ be a complementary of $\ker(X)$ ; finally the choice of an isomorphism $F\rightarrow \mathrm{im}(X)$ depends on $k^2$ parameters. Thus the dimension of the similarity class is $2k(n-k)$. The maximum value of the previous dimension is obtained for $k=\left\lfloor{n/2}\right\rfloor$.
\end{proof}
\begin{lem}   \label{degre3}
The algebraic variety $\{X\in M_n(K)\;|\; X^2-X^3=0\}$ has dimension $\left\lfloor{2n^2/3}\right\rfloor$.
\end{lem}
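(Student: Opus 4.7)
The plan is to stratify the variety $V = \{X \in M_n(K) : X^2 - X^3 = 0\}$ by similarity class, compute the dimension of each stratum in closed form, then maximize.

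From $X^2(I-X) = 0$ the minimal polynomial divides $x^2(1-x)$, so $\sigma(X) \subset \{0,1\}$ with Jordan blocks of size at most $2$ at $0$ and of size exactly $1$ at $1$. Every solution is therefore similar to $N \oplus I_m$ for invariants $(k,m)$ satisfying $k, m \geq 0$ and $2k + m \leq n$, where $m = \dim \ker(X-I)$ and $N$ is an $(n-m) \times (n-m)$ matrix with $N^2 = 0$ of rank $k$ (that is, $k$ Jordan blocks $J_2$ and $n-m-2k$ zero blocks).

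Next I would compute the dimension $d(k,m)$ of the associated similarity class $\mathcal{C}(k,m)$ via $d(k,m) = n^2 - \dim C(X)$. Writing a matrix commuting with $X = N \oplus I_m$ in block form, the identities $NB = B$ and $C = CN$ together with the invertibility of $I - N$ (since $N$ is nilpotent) force the off-diagonal blocks to vanish; hence $C(X) = C(N) \oplus M_m(K)$. The proof of Lemma \ref{nilpo} gives $\dim C(N) = (n-m)^2 - 2k(n-m-k)$, so
$$d(k,m) = 2m(n-m) + 2k(n-m-k).$$

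It remains to maximize $d$ over the integer points of the triangle $\{k, m \geq 0,\ 2k + m \leq n\}$. The function $d$ is strictly concave (its Hessian has eigenvalues $-2$ and $-6$), with unique real maximum at $m = k = n/3$ of value $2n^2/3$. Splitting into the three cases $n \equiv 0, 1, 2 \pmod 3$ and testing the integer point $(k,m)$ nearest $(n/3, n/3)$ yields in each case the value $\lfloor 2n^2/3 \rfloor$; concavity then precludes any better integer point. Since $V$ is the finite union of the locally closed irreducible strata $\mathcal{C}(k,m)$, its dimension equals this maximum. I expect the only minor obstacle to be the case analysis modulo $3$ needed to confirm that the real optimum is actually attained by integers rather than merely approximated, but this is a routine finite check.
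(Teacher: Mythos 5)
Your proof is correct and follows essentially the same route as the paper: stratify the variety by Jordan type, reduce the orbit-dimension count to the square-zero case of Lemma \ref{nilpo}, and maximize the resulting quadratic $2m(n-m)+2k(n-m-k)$ (which coincides with the paper's $r_n(k,t)$ under $m=n-2k-t$) over the admissible integer parameters. Your variants --- computing the orbit dimension via the centralizer rather than by counting choices of eigenspaces, and bounding all integer points at once by global concavity instead of a separate interior/boundary analysis --- are clean but do not change the substance of the argument.
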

\begin{proof}
Any solution is similar to a matrix in the form $\mathrm{diag}(U_1,\cdots,U_k,0_t,I_{n-2k-t})$, where $U_i=J_2$. We seek the dimension of the similarity class of $X$ as a function of $k$. Firstly, the choice of the generalized eigenspace of the eigenvalue $0$ depends on $(n-2k-t)(2k+t)$ ; secondly, the choice of the eigenspace of the eigenvalue $1$ depends on $(n-2k-t)(2k+t)$ ; finally, the choice of the restriction of $X$ to the generalized eigenspace of the eigenvalue $0$ depends on $2k(k+t)$ parameters (cf. the proof of Lemma \ref{nilpo}). Thus the dimension of the similarity class is $r_n(k,t)=2n(t+2k)-6k^2-6kt-2t^2$. If $k,t$ were real variables, then a free extremum of $r_n$ is reached for $k=n/3,t=0$ and is $2n^2/3$.  On the boundary, $t=n-2k$ and $r_n=2k(n-k)$ ; the maximum of $r_n$ is reached for $k=n/2$ and is $n^2/2$. Finally, since $k,t$ are integers, the maximum of $r_n$ is reached in a neighborhood of $(n/3,0)$ and is at most $\left\lfloor{2n^2/3}\right\rfloor$. In fact the previous value is reached, for example, when $k=\left\lfloor{n/3}\right\rfloor$ and $t=1$ if $n=2\;\mathrm{mod}\;3$, $t=0$ otherwise.
\end{proof}
	Now we prove \textbf{Theorem \ref{dim}}.		
				\begin{proof}  According to Theorem \ref{sol}, a solution of Eq (\ref{gener}) is in the form $X=\begin{pmatrix}P&Q\\0&S\end{pmatrix}$ where $P\in M_p(K),S\in M_q(K)$, $P^2-P^3=0\;,\;S^2-S^3=0$.\\
				$\bullet$ Case 1. $XA=AX$. According to Lemma \ref{degre3}, a couple solution $(P,S)$ depends at most on $\left\lfloor{2p^2/3}\right\rfloor+\left\lfloor{2q^2/3}\right\rfloor$ parameters that is less than $\rho-1$.\\
				$\bullet$ Case 2. $XA\not=AX$.
				According to the proof of Proposition \ref{triang1}, $\phi(Q)=-Q$ where $\phi+I$ is singular, that is, there are $\lambda\in\sigma(P),\mu\in\sigma(S)$ such that $\lambda=\mu$ and $f'(\lambda)=-1$ ; thus $\sigma(P)$ and $\sigma(S)$ contain each at least once the eigenvalue $1$. Since the eigenvalue $1$ of $P$ or $S$ has index $1$, the eigenvalue $-1$ of $\phi$ has also index $1$. In particular, when $P,S$ are fixed solutions, the dimension $d$ of the vector space of solutions $Q$ is  the number of eigenvalues of $\phi$ equal to $-1$. Finally, $d=\tau_1\tau_2$ where $\tau_1=\dim(\ker(P-I))\;,\;\tau_2=\dim(\ker(S-I))$. We calculate the maximal dimension $\delta(k_1,k_2,\tau_1,\tau_2)$ of the triple $(P,Q,S)$, solutions of Eq (\ref{gener}), such that $P,S$ have Jordan normal forms 
								$$\mathrm{diag}(U_1,\cdots,U_{k_1},0_{p-2k_1-\tau_1},I_{\tau_1}),\mathrm{diag}(U_1,\cdots,U_{k_2},0_{q-2k_2-\tau_2},I_{\tau_2}).$$
			According to the proof of Lemma \ref{degre3}, $r_p(k_1,\tau_1)=2p(p-\tau_1)-2(p-\tau_1)^2+2k_1(p-\tau_1-k_1)$ and $r_q(k_2,\tau_2)=2q(q-\tau_2)-2(q-\tau_2)^2+2k_2(q-\tau_2-k_2)$. Therefore $\delta(k_1,k_2,\tau_1,\tau_2)=r_p(k_1,\tau_1)+r_q(k_2,\tau_2)+\tau_1\tau_2$.	If $k_1,k_2,\tau_1,\tau_2$ were real variable, then a free extremum of $\delta$ is reached in the following point of the boundary
			$$k_1=(5p-q)/16\geq 0\;,\;k_2=(5q-p)/16\geq 0\;,\;\tau_1=(q+3p)/8\;,\;\tau_2=(3q+p)/8$$
			and is $M_{p,q}=(11p^2+11q^2+2pq)/16$. The previous value is also the maximal value of $\delta$ on  the boundary.	Since $k_1,k_2,t_1,t_2$ are integers, the required dimension $\nu$ is at most $\left\lfloor{M_{p,q}}\right\rfloor=\rho$. More precisely $\nu=\rho$ except in the following cases ($\approx 28\%$ of the couples $(p,q)$) where $\nu=\rho-1$.\\
			Below, the values of $p,q$ such that $\rho=\nu-1$ are given $\mod 16$ and $\epsilon=\pm 1$.
			$$p=0\;,\;q=4,5,8,11,12,$$
			$$\epsilon p=1\;,\;\epsilon q=11,12,14,15,$$
			$$\epsilon p=2\;,\;\epsilon q=2,5,6,14,15,$$
			$$\epsilon p=3\;,\;\epsilon q=5,6,8,9,$$
			$$\epsilon p=4\;,\;\epsilon q=0,8,9,12,15,$$				
			$$\epsilon p=5\;,\;\epsilon q=0,2,3,15,$$				
			$$\epsilon p=6\;,\;\epsilon q=2,3,6,9,10,$$				
			$$\epsilon p=7\;,\;\epsilon q=9,10,12,13,$$	
			$$ p=8\;,\; q=0,3,4,12,13.$$			
				\end{proof}	
			\begin{rem}
		$i)$ Note that, if $p=q$, then $\rho=\left\lfloor{3p^2/2}\right\rfloor$. According to the previous table, the required dimension $\nu$ is $\rho$, except if $p=\pm2\text{ or }\pm 6\;\mod\;16$, where $\nu=\rho-1$.	\\
					$ii)$ Clearly, if the inequality $\dfrac{1}{5}\leq \dfrac{p}{q}\leq 5$ is not satisfied, then $\nu$ can be far from $\rho$. For instance, if $p=2,q=26$, then $\nu=\rho-6$.
				\end{rem}


\bibliographystyle{plain}

\begin{thebibliography}{99}


\bibitem[1]{1} G. Bourgeois, \emph{How to solve the matrix equation $XA-AX=f(X)$}, Linear algebra and applications, \textbf{434-3}, (2011), 657-668.
\bibitem[2]{2} N. J. Higham, Functions of matrices. Theory and computation, SIAM 2008.

\end{thebibliography}

\end{document}